\numberwithin{equation}{section}
\numberwithin{figure}{section}
\numberwithin{table}{section}
\long\def\MSC#1\EndMSC{\def\arg{#1}\ifx\arg\empty\relax\else
	{\narrower\noindent%
		{2020 Mathematics Subject Classification}: #1\\} \fi}
\long\def\PACS#1\EndPACS{\def\arg{#1}\ifx\arg\empty\relax\else
	{\narrower\noindent%
		{PACS numbers}: #1}\fi}
\long\def\KEY#1\EndKEY{\def\arg{#1}\ifx\arg\empty\relax\else
	{\narrower\noindent%
		Keywords: #1\\}\fi}
\theoremstyle{plain}
\newtheorem{theorem}{Theorem}[section]
\newtheorem{lemma}[theorem]{Lemma}
\newtheorem{proposition}[theorem]{Proposition}
\theoremstyle{definition}
\theoremstyle{remark}
\newtheorem{remark}[theorem]{Remark}
\newcommand{\norm}[1]{\lVert#1\rVert}
\newcommand{\abs}[1]{\lvert#1\rvert} 
\newcommand{\inner}[1]{\langle#1\rangle} 
\newcommand{\redel}{\mathop{\textup{Re}}}
\newcommand{\essinf}{\mathop{\textup{ess\,inf}}}
\newcommand{\spanm}{\mathop{\textup{span}}}
\newcommand{\I}{\mathrm{i}}    
\newcommand{\e}{\mathrm{e}}    
\newcommand{\di}{\mathrm{d}}   
\newcommand{\R}{\mathbb{R}}
\newcommand{\N}{\mathbb{N}}
\newcommand{\C}{\mathbb{C}}
\newcommand{\Z}{\mathbb{Z}}
\begin{document}
\title[Reconstruction of unbounded perturbations in 3D]{Linearised Calder\'on problem: Reconstruction of unbounded perturbations in 3D}
	
\author[H.~Garde]{Henrik Garde}
\address[H.~Garde]{Department of Mathematics, Aarhus University, Ny Munkegade 118, 8000 Aarhus C, Denmark.}
\email{garde@math.au.dk}
    
\author[M.~Hirvensalo]{Markus Hirvensalo}
\address[M.~Hirvensalo]{Department of Mathematics and Systems Analysis, Aalto University, P.O. Box~11100, 00076 Helsinki, Finland.}
\email{markus.hirvensalo@aalto.fi}
	
\begin{abstract}
    Recently an algorithm was given in [Garde \& Hyv\"onen, SIAM J.\ Math.\ Anal., 2024] for exact direct reconstruction of any $L^2$ perturbation from linearised data in the two-dimensional linearised Calder\'on problem. It was a simple forward substitution method based on a 2D~Zernike basis. We now consider the three-dimensional linearised Calder\'on problem in a ball, and use a 3D~Zernike basis to obtain a method for exact direct reconstruction of any $L^3$ perturbation from linearised data. The method is likewise a forward substitution, hence making it very efficient to numerically implement. Moreover, the 3D method only makes use of a relatively small subset of boundary measurements for exact reconstruction, compared to a full $L^2$ basis of current densities.
\end{abstract}	
\maketitle
	
\KEY
Calder\'on problem, 
linearisation,
reconstruction,
3D Zernike basis.
\EndKEY
	
\MSC
35R30, 35R25.
\EndMSC
 
\section{Introduction} \label{sec:intro}

Let $B$ be the unit ball in $\R^3$. For a conductivity coefficient $\gamma\in L^\infty(B;\R)$, with $\essinf \gamma > 0$, and a surface current density 
\begin{equation*}
   f \in L^2_\diamond(\partial B) = \{ g\in L^2(\partial B) \mid \inner{g,1}_{L^2(\partial B)} = 0 \},
\end{equation*} 
the continuum model for the conductivity problem states that the corresponding interior electric potential $u$ satisfies
\begin{equation} \label{eq:strong_form}
   -\nabla\cdot(\gamma\nabla u) = 0 \text{ in } B, \qquad \nu\cdot(\gamma\nabla u) = f \text{ on } \partial B,
\end{equation}
where $\nu$ is the exterior unit normal. The Lax--Milgram lemma yields a unique solution $u^\gamma_f$ to \eqref{eq:strong_form} in the space
\begin{equation*}
   H_\diamond^1(B) = \{w\in H^1(B) \mid w|_{\partial B} \in L^2_\diamond(\partial B)\}.
\end{equation*}
The Neumann-to-Dirichlet (ND) map $\Lambda(\gamma)f = u^\gamma_f|_{\partial B}$ is a compact self-adjoint operator in the space $\mathscr{L}(L^2_\diamond(\partial B))$, mapping any applied current density to the corresponding boundary potential measurement. 

The nonlinear forward map $\gamma\mapsto\Lambda(\gamma)$ is Fr\'echet differentiable with respect to complex-valued perturbations $\eta \in L^\infty(B)$. Let $F = D \Lambda(1; \eta)$ be the Fr\'echet derivative of $\Lambda$ at $\gamma \equiv 1$, with respect to perturbation $\eta$. If $u_f$ and $u_g$ are harmonic functions in $B$ with $f$ and $g$ as their Neumann traces, respectively, then $F\in \mathscr{L}(L^\infty(B),\mathscr{L}(L^2_\diamond(\partial B)))$ is characterised by
\begin{equation} \label{eq:frechet_redef}
   \inner{(F \eta) f, g}_{L^2(\partial B)} = -\int_B \eta \nabla u_f \cdot \overline{\nabla u_g} \,\di x,
\end{equation}
for $\eta \in L^\infty(B)$ and $f, g \in L^2_\diamond(\partial B)$. The linearised Calder\'on problem is:
\begin{equation*}
   \emph{Reconstruct $\eta$ from knowledge of $F\eta$}. 
\end{equation*}
This is in contrast to the (nonlinear) Calder\'on problem, to reconstruct a coefficient $\gamma$ from $\Lambda(\gamma)$.

By Proposition~\ref{prop:Ld}, $F$ continuously extends to an operator acting on perturbations in the larger space $L^3(B)\supset L^\infty(B)$, hence allowing for unbounded perturbations. Generally, for a bounded smooth domain $\Omega$ in $\R^d$, $F$ extends to perturbations in $L^d(\Omega)$. For the extension result, it is important that the Neumann conditions are in $L^2$ and not e.g.\ $H^{-1/2}$. This indicates that it may be possible to generalise the two-dimensional reconstruction method in \cite{Garde2024a} to three spatial dimensions, but for general perturbations in $L^3$ instead of $L^2$. The technique from \cite{Garde2024a} for obtaining stability for infinite-dimensional spaces of perturbations, however, is out of reach, as we cannot make use of the Hilbert--Schmidt topology in three spatial dimensions as outlined in \cite[Section~1.3]{Garde2024a} and \cite[Appendix~A]{Garde2022a}. 

Calder\'on's original injectivity proof for the linearised problem directly extends to $L^3(B)$, showing that $F\eta$ vanishes identically, if and only if, the Fourier transform for $\eta$ (zero-extended to $\R^3$) vanishes \cite{Calderon1980}. See also \cite{Ferreira2009,Ferreira2020,Sharafutdinov2009,Sjostrand2016} for additional uniqueness results in the linearised Calder\'on problem, including the case of partial data \cite{Ferreira2009,Sjostrand2016}.

In spherical coordinates, with radial distance $r$, polar angle $\theta$, and azimuthal angle $\varphi$, the 3D Zernike basis functions are
\begin{equation} \label{eq:basis}
   \psi_{\ell}^{k,m} (r, \theta, \varphi) = R_\ell^k(r) Y_\ell^m(\theta,\varphi), \quad k,\ell\in\N_0,\enskip m\in\Z_\ell,
\end{equation}
where 
\begin{equation*}
   \Z_\ell = \{-\ell,\dots,\ell\}.
\end{equation*}
Here $Y_\ell^m$ are the spherical harmonics of degree $\ell$ and order $m$, and $R_\ell^k$ are 3D radial Zernike polynomials (see Section~\ref{sec:basis} for definitions and notation).  $\{\psi_\ell^{k,m}\}_{\ell,k\in\N_0, m\in\Z_\ell}$ is an orthonormal basis for $L^2(B)$ by Proposition~\ref{prop:basis}, which we will use for expanding a perturbation $\eta\in L^3(B)\subset L^2(B)$.

Any $\eta\in L^3(B)$ can be reconstructed from the linearised data $F\eta$ via our main result below.
\begin{theorem} \label{thm:main}
    For any $\eta \in L^3(B)$, expanded as
    \begin{equation*}
		\eta = \sum_{k\in\N_0}\sum_{\ell\in\N_0}\sum_{m\in\Z_\ell} c_\ell^{k,m}\psi_\ell^{k,m},
    \end{equation*}
    for an $\ell^2$-sequence of coefficients $c_{\ell}^{k,m}$, then
    \begin{equation*}
		c_{\ell}^{k,m} = (Q_{\ell,0}^{k,m,k})^{-1}\Bigl( \inner{(F\eta)Y_{k+1}^0,Y_{\ell+k+1}^m}_{L^2(\partial B)} - \sum_{q=0}^{k-1}\sum_{s = 0}^{k-q} Q_{\ell,s}^{k,m,q}c_{\ell+2s}^{q,m}  \Bigr).
    \end{equation*}
    The scalars $Q_{\ell,s}^{k,m,q}$ are independent of $\eta$, and defined as 
    \begin{equation*}
        Q_{\ell,s}^{k,m,q} = (-1)^{m+1}\frac{\sqrt{2\ell+4q+4s+3}(k-s+1)(k-q-s+1)_q}{(k+1)(\ell+k+1)(\ell+k+s+\tfrac{5}{2})_q}G_{k+1,\ell+k+1,\ell+2s}^{0,-m,m},
    \end{equation*}
    using Pochhammer symbols (rising factorials) and the Gaunt coefficient
    \begin{equation*}
		G_{k+1,\ell+k+1,\ell+2s}^{0,-m,m} = \int_{\partial B} Y_{k+1}^0 Y_{\ell+k+1}^{-m}Y_{\ell+2s}^{m}\,\di S.
    \end{equation*}
\end{theorem}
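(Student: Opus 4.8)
The plan is to evaluate the defining identity~\eqref{eq:frechet_redef} on the single current pair $f=Y_{k+1}^0$ and $g=Y_{\ell+k+1}^m$ (both lie in $L^2_\diamond(\partial B)$ since $k+1,\ell+k+1\ge1$) and then extract $c_\ell^{k,m}$ by triangular elimination. First I would note that the harmonic function in $B$ with Neumann trace $Y_n^m$, $n\ge1$, is the solid spherical harmonic $\tfrac1n r^nY_n^m$, because $\partial_r(r^nY_n^m)\big|_{r=1}=nY_n^m$; thus $u_f=\tfrac1{k+1}r^{k+1}Y_{k+1}^0$ and $u_g=\tfrac1{\ell+k+1}r^{\ell+k+1}Y_{\ell+k+1}^m$. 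Substituting into~\eqref{eq:frechet_redef}, using that $Y_{k+1}^0$ and the powers of $r$ are real and that $\overline{Y_{\ell+k+1}^m}=(-1)^mY_{\ell+k+1}^{-m}$ (so $\overline{\nabla u_g}$ is again the gradient of a solid harmonic), together with the identity $\nabla h_1\cdot\nabla h_2=\tfrac12\Delta(h_1h_2)$ valid for harmonic $h_1,h_2$, gives

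\begin{equation*}
   \inner{(F\eta)Y_{k+1}^0,Y_{\ell+k+1}^m}_{L^2(\partial B)}=-\frac{(-1)^m}{2(k+1)(\ell+k+1)}\int_B\eta\,\Delta\bigl(r^{\ell+2k+2}\,Y_{k+1}^0Y_{\ell+k+1}^{-m}\bigr)\,\di x,
\end{equation*}

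the right-hand side being finite for $\eta\in L^3(B)\subset L^1(B)$ since the remaining factor is bounded.

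Next I would linearise the product of spherical harmonics. The order selection rule forces $Y_{k+1}^0Y_{\ell+k+1}^{-m}$ to be a combination of the $Y_{\ell+2s}^{-m}$, and the degree/parity rules restrict $s$ to $0,1,\dots,k+1$, with Gaunt coefficients: $Y_{k+1}^0Y_{\ell+k+1}^{-m}=(-1)^m\sum_{s=0}^{k+1}G_{k+1,\ell+k+1,\ell+2s}^{0,-m,m}Y_{\ell+2s}^{-m}$. Applying $\Delta(r^pY_n^q)=[p(p+1)-n(n+1)]r^{p-2}Y_n^q$ with $p=\ell+2k+2$ yields the factor $\mu_s:=(\ell+2k+2)(\ell+2k+3)-(\ell+2s)(\ell+2s+1)=2(k-s+1)(2\ell+2k+2s+3)$; since $\mu_{k+1}=0$, the top term drops out and only $s=0,\dots,k$ survive, leaving a finite linear combination of the integrals $\int_B\eta\,r^{\ell+2k}Y_{\ell+2s}^{-m}\,\di x$.

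Expanding $\eta$ in the Zernike basis and using orthogonality of spherical harmonics on $\partial B$ (and $L^2$-continuity of the pairing) then collapses each such integral to $\int_B\eta\,r^{\ell+2k}Y_{\ell+2s}^{-m}\,\di x=(-1)^m\sum_{q}c_{\ell+2s}^{q,m}\int_0^1 R_{\ell+2s}^q(r)\,r^{\ell+2k+2}\,\di r$. The computational heart is the radial moment identity

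\begin{equation*}
   \int_0^1 R_N^q(r)\,r^{N+2p+2}\,\di r=\frac{\sqrt{2N+4q+3}\,(p-q+1)_q}{(2N+2p+3)\,(N+p+\tfrac{5}{2})_q}\qquad(0\le q\le p),
\end{equation*}

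which vanishes for $q>p$ and which one proves from the Jacobi representation $R_N^q(r)\propto r^NP_q^{(0,\,N+1/2)}(2r^2-1)$ and a standard Jacobi moment formula (or by induction on $p$); here $N=\ell+2s$, $p=k-s$, so only coefficients with radial index $q\le k-s$ appear. Gathering the constants --- the factor $2\ell+2k+2s+3$ in $\mu_s$ cancelling the same factor in the denominator of the moment, the normalising constant $\sqrt{2\ell+4q+4s+3}$ of $R_{\ell+2s}^q$, and the three factors $(-1)^m$ combining with an overall minus sign to $(-1)^{m+1}$ --- reproduces exactly $Q_{\ell,s}^{k,m,q}$ and the relation $\inner{(F\eta)Y_{k+1}^0,Y_{\ell+k+1}^m}_{L^2(\partial B)}=\sum_{s=0}^{k}\sum_{q=0}^{k-s}Q_{\ell,s}^{k,m,q}c_{\ell+2s}^{q,m}$.

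Finally, $(s,q)=(0,k)$ is the unique pair in this double sum with radial index equal to $k$; transferring all remaining terms (which have $q\le k-1$) to the other side and relabelling yields the stated formula, a genuine forward substitution once the coefficients are ordered by the radial index $k$. Solvability needs $Q_{\ell,0}^{k,m,k}\ne0$, i.e.\ $G_{k+1,\ell+k+1,\ell}^{0,-m,m}\ne0$: this Gaunt coefficient is of stretched type ($\ell+k+1=\ell+(k+1)$) and satisfies the order rule with $|m|\le\ell$, hence is nonzero. I expect the main obstacle to be establishing the radial moment identity and carrying out the bookkeeping so that $Q_{\ell,s}^{k,m,q}$ comes out precisely as claimed; some care is also needed with the spherical-harmonic phase convention (which fixes where the $(-1)^m$ factors land) and with the nonvanishing of the stretched Gaunt coefficient.
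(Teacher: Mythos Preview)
Your argument is correct and tracks the paper's proof in overall architecture: same Neumann data $Y_{k+1}^0$ and $Y_{\ell+k+1}^m$, same Gaunt selection rules restricting to $\ell'=\ell+2s$ with $s\le k$, same radial moment formula (the paper's $\chi_{\ell+2s}^{k-s,q}$, cited from Mathar), and the same triangular elimination with the stretched Gaunt coefficient guaranteeing $Q_{\ell,0}^{k,m,k}\neq 0$. The one substantive difference is how you treat $\nabla u_f\cdot\overline{\nabla u_g}$. The paper writes this in spherical coordinates as a radial factor times $Y_{k+1}^0Y_{\ell+k+1}^{-m}+\tfrac{1}{(k+1)(\ell+k+1)}\nabla_{\partial B}Y_{k+1}^0\cdot\nabla_{\partial B}Y_{\ell+k+1}^{-m}$ and then invokes a separately proved lemma (an integration-by-parts identity on $\partial B$) to convert the surface-gradient integral into a multiple of the Gaunt integral, producing the factor $\tau_{\ell,\ell+2s}^k=\tfrac{(k+1-s)(2\ell+2k+2s+3)}{(k+1)(\ell+k+1)}$. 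You instead use $\nabla h_1\cdot\nabla h_2=\tfrac12\Delta(h_1h_2)$ directly and apply $\Delta(r^pY_n^q)=[p(p+1)-n(n+1)]r^{p-2}Y_n^q$, obtaining the same factor via $\mu_s/[2(k+1)(\ell+k+1)]$. Your route is slightly more elementary in that it sidesteps the surface-gradient lemma entirely; the paper's route, on the other hand, isolates that lemma as a result of independent interest. The remark you make about $L^2$-continuity of the pairing is exactly the subtlety the paper flags: the Zernike series for $\eta$ converges only in $L^2$, so one needs the remaining factor (here bounded) to lie in $L^2$ before interchanging sum and integral.
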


The reason for leaving the Gaunt coefficient in $Q_{\ell,s}^{k,m,q}$, instead of inserting its exact value from \eqref{eq:gauntconstants}, is that Gaunt coefficients can be efficiently computed using finite element methods, or using direct implementations in libraries such as SymPy.

If we let 
\begin{equation*}
    \eta_k = \sum_{\ell\in\N_0}\sum_{m\in\Z_\ell} c_\ell^{k,m}\psi_\ell^{k,m},
\end{equation*}
then $\eta_k$ is an orthogonal projection of $\eta$ onto a particular infinite-dimensional subspace of $L^2(B)$, with $k=0$ giving the subspace of harmonic functions. Since
\begin{equation*}
    \eta = \sum_{k\in\N_0} \eta_k,
\end{equation*}
then Theorem~\ref{thm:main} implies that we can inductively reconstruct $\eta$ one orthogonal projection at a time. We have that $\eta_0$, i.e.\ the coefficients $\{c_{\ell}^{0,m}\}_{\ell\in\N_0,m\in\Z_\ell}$, is directly reconstructed from the data $(F\eta)Y_1^0$. Next $\eta_1$, i.e.\ the coefficients $\{c_{\ell}^{1,m}\}_{\ell\in\N_0,m\in\Z_\ell}$, is directly reconstructed from the data $(F\eta)Y_2^0$ and $\eta_0$. In general, we have that $\eta_k$ is directly reconstructed from $(F\eta)Y_{k+1}^0$ and $\eta_0,\dots,\eta_{k-1}$. Moreover, if one considers only finite measurements $\{(F\eta)Y_{k+1}^0\}_{k=0}^K$, then the formula in Theorem \ref{thm:main} still provides the exact reconstruction of the orthogonal projections $\eta_0,\dots,\eta_K$, due to the method essentially being a forward substitution.

Theorem~\ref{thm:main} is the 3D variant of the 2D reconstruction method in \cite[Theorem~1.3]{Garde2024a}, which made use of a 2D Zernike basis \cite{Zernike1934} to obtain a very similar triangular structure for the solution formulas of the coefficients. We note the interesting fact, that in 3D less data is required for the reconstruction, in the sense that the current densities $Y_{k+1}^{0}$ applied for the measurements do not vary in the $m$-index of the spherical harmonics. If all $Y_\ell^m$ spherical harmonics are used as current densities in the measurements, there would be an enormous redundancy since the range of $m$-indices grows as $2\ell+1$. This is unlike in 2D where all the Fourier basis functions are used. 

Another difference to the 2D method, is that the 3D method is likely less numerically stable since there is an extra sum, where previously computed coefficients are needed with a higher $\ell$-index than the $c_{\ell}^{k,m}$ that is being reconstructed. This comes from the fact, that the product of two exponentials is once again an exponential and the additional sum therefore does not appear in 2D by orthogonality in the Fourier basis. However in 3D the product of two spherical harmonics is a finite linear combination of spherical harmonics of various degrees. Nevertheless, we are still able to obtain good approximate reconstructions from inaccurate measurements.

\subsection{Article structure} 

We give a numerical example in Section~\ref{sec:numerical}, indicating how the inherent ill-posedness of the problem is observed in practice. Section~\ref{sec:basis} introduces the spherical harmonics, 3D radial Zernike polynomials, and related results. Section~\ref{sec:proofthm} proves Theorem~\ref{thm:main}. Finally, Appendix~\ref{sec:extension} extends the linearised problem from perturbations in $L^\infty$ to $L^d$ for spatial dimension $d$.

\section{A simple numerical example} \label{sec:numerical}

Consider the following translated and very localised Gaussian-type perturbation:
\begin{equation} \label{eq:etaexample}
    \eta(x) = \e^{-50\abs{x - (0,\frac{3}{10},0)^{\textup{T}}}^2} = \e^{-50r^2+30r\sin(\theta)\sin(\varphi)-\frac{9}{2}}.
\end{equation}

\begin{figure}[htb]
    \centering
    \includegraphics[width=.8\textwidth]{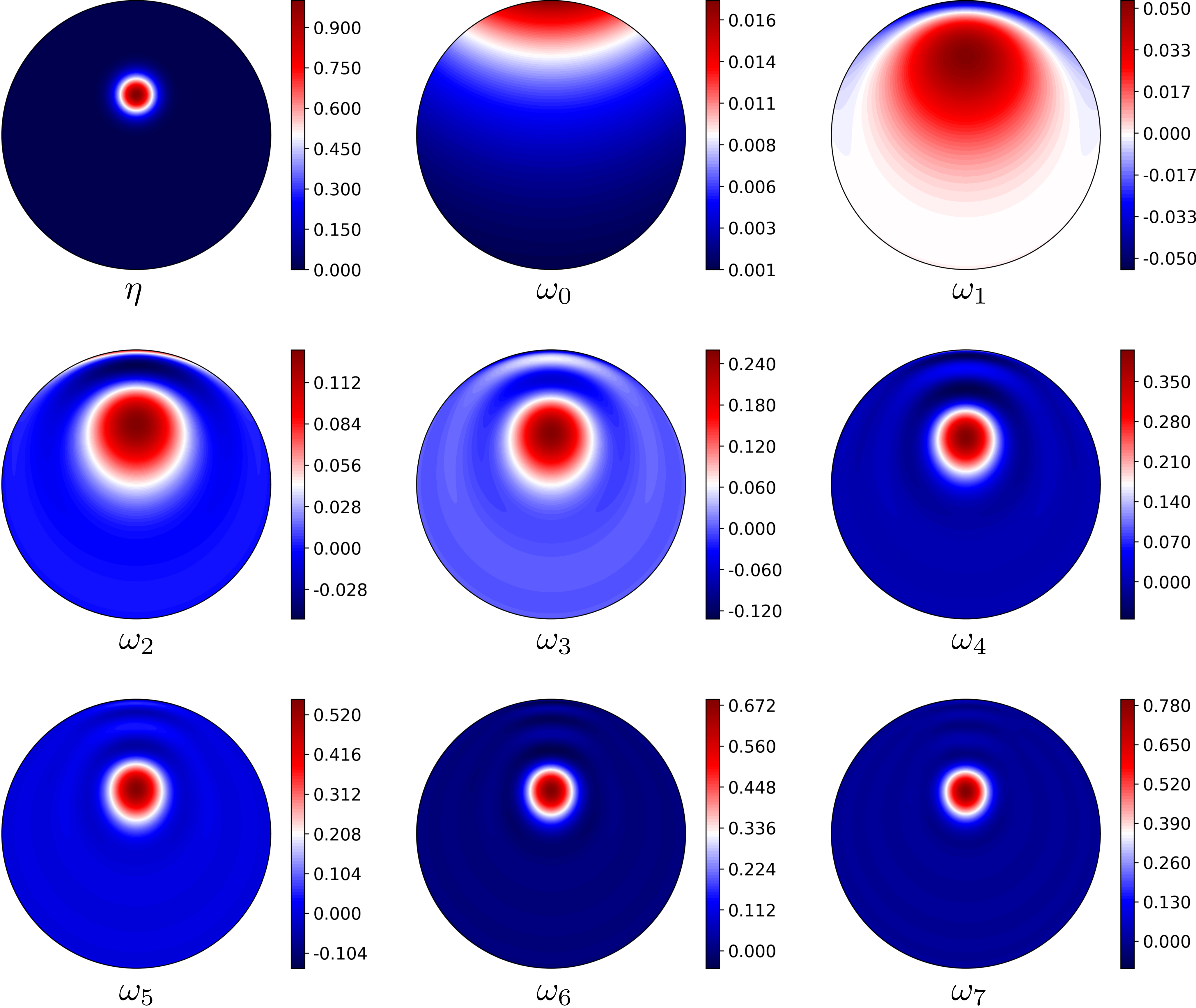}
    \caption{The perturbation $\eta$ from \eqref{eq:etaexample} and $\omega_K$ from \eqref{eq:omegaK} for $K = 0,\dots,7$. The plots are in the $xy$-plane.}
    \label{fig:etaplots}
\end{figure}

Figure~\ref{fig:etaplots} shows $\eta$ and the sum of the real part of the first few projections $\eta_k$,
\begin{equation} \label{eq:omegaK}
    \omega_K = \sum_{k=0}^K \redel(\eta_k),
\end{equation}
where $\eta_k$ is computed by evaluating inner products of $\eta$ with the 3D Zernike basis functions $\psi_{\ell}^{k,m}$, with $m\in\Z_\ell$ and up to a sufficiently high $\ell$-index (here up to $\ell = 30$). With accurate measurements, the full reconstruction will follow the pattern seen in Figure~\ref{fig:etaplots} for increasing $K$.

We may use \eqref{eq:Feta1} together with Mathematica to compute very accurate simulated measurements, evaluated correctly to 16 digits. A natural way of regularising the computations  is a $k$-dependent truncation in the $\ell$-indices, and still using all $m\in\Z_\ell$ (see also~\cite{Autio2024} for more details in a 2D setting). Thus let 
\begin{equation} \label{eq:omegatildeK}
    \widetilde{\omega}_K = \sum_{k=0}^{K}\sum_{\ell=0}^{\ell_k} \sum_{m\in\Z_\ell} \redel(\tilde{c}_{\ell}^{k,m}\psi_{\ell}^{k,m}),
\end{equation}
for $K = 0,\dots,7$, and with truncations $\ell_{0} = 20$, $\ell_1 = 18$, $\ell_2 = 16$, $\ell_3 = 14$, $\ell_4 = 12$, $\ell_5 = 10$, $\ell_6 = 8$, and $\ell_7 = 6$. The approximate coefficients $\tilde{c}_{\ell}^{k,m}$ are found using the formula in Theorem~\ref{thm:main}, but with the accurate measurement data simulated in Mathematica. The results can be seen in Figure~\ref{fig:reconplots1} and can be compared with Figure~\ref{fig:etaplots}; the approximate reconstructions are in fact nearly perfect with the highly accurate measurements, even for the truncated indices. Indeed the triangular nature of the method implies that the computed coefficients used for the approximate reconstructions in Figure~\ref{fig:reconplots1} are \emph{unaffected} by the truncation to a finite set of indices, which is very unique compared to other reconstruction methods.

\begin{figure}[htb]
    \centering
    \includegraphics[width=.8\textwidth]{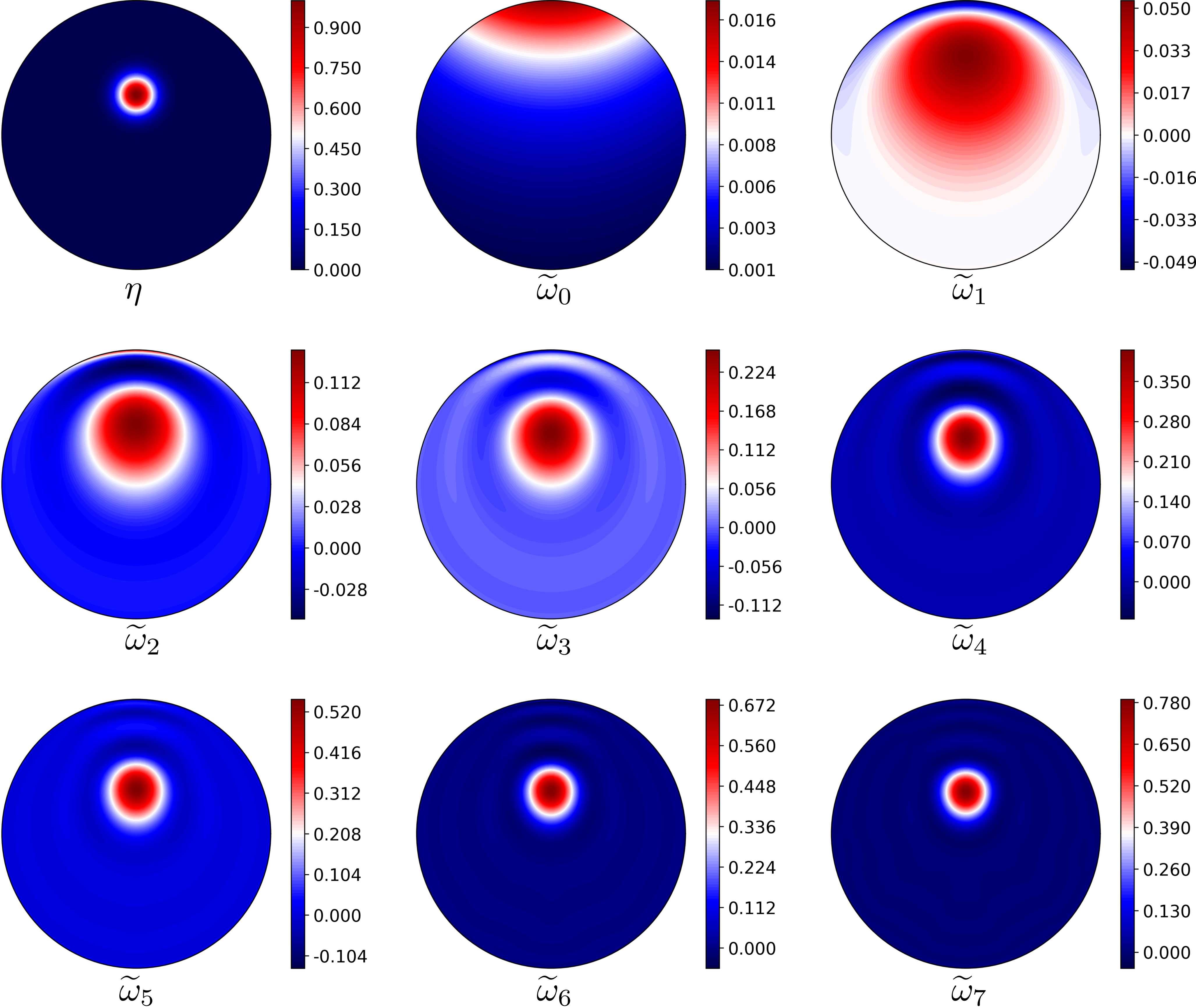}
    \caption{The perturbation $\eta$ from \eqref{eq:etaexample} and approximations $\widetilde{\omega}_K$ from \eqref{eq:omegatildeK} for $K = 0,\dots,7$ based on accurate measurements from Mathematica. The plots are in the $xy$-plane.}
    \label{fig:reconplots1}
\end{figure}

To indicate of how this looks in a more realistic setting, with some inaccuracies in the measurements, we simulate the measurements (including numerically computing the interior electric potentials) from a rather rough finite element (FE) discretisation. We use $\mathbb{P}_1$-elements on a tetrahedral mesh based on 6017 nodes (1026 boundary nodes) with the Python package scikit-fem~\cite{skfem}. The approximations are given by \eqref{eq:omegatildeK} for $K = 0,\dots,4$ and with truncations $\ell_{0} = 16$, $\ell_1 = 11$, $\ell_2 = 7$, $\ell_3 = 5$, and $\ell_4 = 3$. The approximate coefficients $\tilde{c}_{\ell}^{k,m}$ are found using the formula in Theorem~\ref{thm:main}, but this time with the approximate measurement data simulated by the rough FE model. The results can be seen in Figure~\ref{fig:reconplots2} and can be compared with the top two rows of Figure~\ref{fig:etaplots}. In practice with noisy measurements, this is realistically what can be achieved, and these coefficients appear to be numerically quite stable to compute.

\begin{figure}[htb]
    \centering
    \includegraphics[width=.8\textwidth]{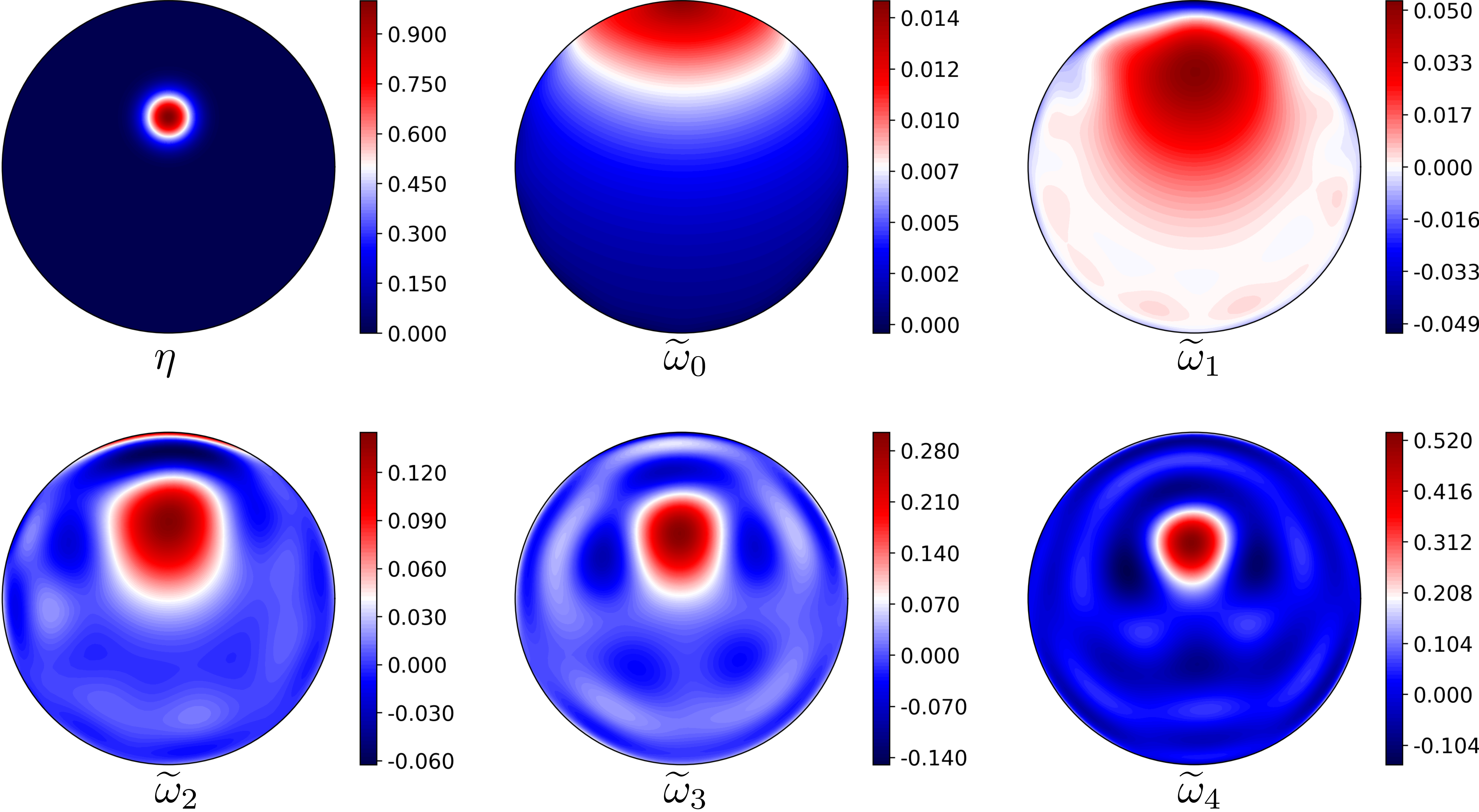}
    \caption{The perturbation $\eta$ from \eqref{eq:etaexample} and approximations $\widetilde{\omega}_K$ from \eqref{eq:omegatildeK} for $K = 0,\dots,4$ based measurements from a rough FE model. The plots are in the $xy$-plane.}
    \label{fig:reconplots2}
\end{figure}

For more on numerical computations and details on regularisation with this type of method, a numerical implementation of the 2D method from \cite{Garde2024a}, taking the triangular structure into account, can be found in \cite{Autio2024}. Even for truncated difference-data $\Lambda(1+\eta)-\Lambda(1)$, rather than linearised data, the (regularised) method consistently produces decent reconstructions. This can even be seen for data coming from practical electrode models, such as the complete electrode model, and for real-world measurements in an essentially 2D measurement setup. See also \cite{Allers91} for a numerical study using a 2D Zernike basis.
	
\section{Constructing the 3D Zernike orthonormal basis} \label{sec:basis}

This section defines spherical harmonics and 3D radial Zernike polynomials, and elaborates on related results needed for proving Theorem~\ref{thm:main}. 

\subsection{On spherical harmonics} 

We recall some facts about spherical harmonics; see \cite[Chapter~5]{Axler_2001} and \cite{Efthimiou_2014} for additional insights. See also \cite[§34]{NIST} for more info on Wigner $3j$ symbols.

A homogeneous polynomial $p:\R^3\to\C$ of degree $\ell$ (here in three spatial dimensions) satisfies $p(\lambda x) = \lambda^\ell p(x)$ for any $\lambda\in\R$, or equivalently
\begin{equation*}
    p(x) = \sum_{|\alpha| = \ell} c_\alpha x^\alpha,
\end{equation*}
for coefficients $c_\alpha\in \C$ with $\alpha\in \N_0^3$. Multi-index notation is used, where for $\alpha\in \N_0^3$ we have $x^\alpha = x_1^{\alpha_1} x_2^{\alpha_2} x_3^{\alpha_3}$ and $\abs{\alpha} = \alpha_1+\alpha_2+\alpha_3$. Let
\begin{equation*} 
    \mathcal{A}_\ell = \Bigl\{ p : \R^3 \to \C \mid p(x) = \sum_{|\alpha| = \ell} c_\alpha x^\alpha, \; c_\alpha \in \C, \; \Delta p = 0 \Bigr\}
\end{equation*}
be the space of \emph{harmonic} homogeneous polynomials of degree $\ell$. 

The space of spherical harmonics of degree $\ell$ is defined as the restriction of the harmonic homogeneous polynomials of degree $\ell$ to the unit sphere:
\begin{equation*} 
    \mathcal{H}_\ell = \{ p|_{\partial B} \mid p \in \mathcal{A}_\ell \}.
\end{equation*}
We have $\dim(\mathcal{H}_\ell) = \dim(\mathcal{A}_\ell) = 2\ell+1$ in three spatial dimensions.

A standard orthonormal basis for $\mathcal{H}_\ell$ comprises the spherical harmonics of degree~$\ell$ and order~$m$, $\mathcal{B}_\ell = \{Y_\ell^m\}_{m\in\Z_\ell}$, given by 
\begin{equation*}
    Y_\ell^m(\theta,\varphi) = \sqrt{\frac{2\ell+1}{4\pi}\frac{(\ell-m)!}{(\ell+m)!}}P^m_\ell(\cos(\theta))\e^{\I m\varphi}, \quad \ell\in\N_0, \enskip m\in\Z_\ell,
\end{equation*}
where $P_\ell^m$ is an \emph{associated Legendre polynomial} (with the Condon--Shortley phase). A symmetry condition is satisfied:
\begin{equation} \label{eq:conjsphharm}
    \overline{Y_\ell^m} = (-1)^m Y_\ell^{-m}.
\end{equation}

The $\mathcal{H}_\ell$-spaces are the eigenspaces for the Laplace–Beltrami operator $\Delta_{\partial B}$, with
\begin{equation} \label{eq:LBeig}
    \Delta_{\partial B}g = -\ell(\ell+1)g, \quad g\in\mathcal{H}_\ell.
\end{equation}
Thus there is the following orthogonal decomposition:
\begin{equation*}
    L^2(\partial B) = \bigoplus_{\ell=0}^\infty \mathcal{H}_\ell,
\end{equation*}
from which it holds that $\bigcup_{\ell=0}^\infty \mathcal{B}_\ell$ is an orthonormal basis for $L^2(\partial B)$, and $\bigcup_{\ell=1}^\infty \mathcal{B}_\ell$ is an orthonormal basis for $L^2_\diamond(\partial B)$.

Products of spherical harmonics have finite expansions in terms of spherical harmonics, and the Wigner $3j$ symbols, together with \eqref{eq:conjsphharm}, can be used for computing the inner products (also called Gaunt coefficients) \cite[§34.3(vii) Eq.~34.3.22]{NIST}:
\begin{equation} \label{eq:gaunt}
    \int_{\partial B} Y_{\ell_1}^{m_1}Y_{\ell_2}^{m_2}Y_{\ell_3}^{m_3}\,\di S = G_{\ell_1,\ell_2,\ell_3}^{m_1,m_2,m_3}
\end{equation}
with
\begin{equation} \label{eq:gauntconstants}
    G_{\ell_1,\ell_2,\ell_3}^{m_1,m_2,m_3} = \sqrt{\frac{(2\ell_1+1)(2\ell_2+1)(2\ell_3+1)}{4\pi}}
    \begin{pmatrix}
        \ell_1 & \ell_2 & \ell_3 \\
        0 & 0 & 0
    \end{pmatrix}
    \begin{pmatrix}
        \ell_1 & \ell_2 & \ell_3 \\
        m_1 & m_2 & m_3
    \end{pmatrix}.
\end{equation}
The integral in \eqref{eq:gaunt} vanishes if the following conditions are not met:
\begin{equation} \label{eq:gauntindices}
    \abs{\ell_1-\ell_2}\leq \ell_3 \leq \ell_1+\ell_2, \quad \sum_{j=1}^3 m_j = 0, \quad \text{and} \quad \sum_{j=1}^3\ell_j \text{ is even}.
\end{equation}
\begin{enumerate}[(i)]
    \item The first condition in \eqref{eq:gauntindices} is known as the \emph{triangle condition} for $3j$ symbols, and any $3j$ symbol vanishes if this condition is not met (see \cite[§34.2]{NIST} and \cite[Section~2.1]{Pain2020}).
    \item If the second condition in \eqref{eq:gauntindices} is not met, then the second $3j$ symbol in \eqref{eq:gauntconstants} vanishes \cite[§34.2]{NIST}, although there are also other non-trivial zeros of this $3j$ symbol \cite{Pain2020}.
    \item When the triangle condition holds, the third condition in \eqref{eq:gauntindices} is not met, if and only if, the first $3j$ symbol in \eqref{eq:gauntconstants} vanishes (see \cite[§34.3(i) Eq.~34.3.5]{NIST} and \cite[Eqs.~(47) and~(48)]{Pain2020}).
\end{enumerate}

One can combine \eqref{eq:conjsphharm} and \eqref{eq:gaunt} with the following lemma, to give finite expansions in terms of spherical harmonics for the product of gradients of spherical harmonics. In the following $\nabla_{\partial B}$ denotes a surface gradient on $\partial B$. As we have been unable to find the result published, we give a proof for the sake completion\footnote{The proof is inspired by ideas from a physics blog: \url{https://hyad.es/vsphint}}.

\begin{lemma} \label{lemma:3gradient}
    Let $\ell_j\in \N_0$ and $g_{\ell_j} \in \mathcal{H}_{\ell_j}$ for $j\in\{1,2,3\}$. Then
    \begin{equation*}
        \int_{\partial B} (\nabla_{\partial B} g_{\ell_1} \cdot \nabla_{\partial B} g_{\ell_2}) g_{\ell_3} \, \di S = \frac{\ell_1 (\ell_1 + 1) + \ell_2 (\ell_2 + 1) - \ell_3 (\ell_3 + 1)}{2} \int_{\partial B} g_{\ell_1} g_{\ell_2} g_{\ell_3} \, \di S.
    \end{equation*}
\end{lemma}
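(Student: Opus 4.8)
The plan is to reduce the identity to an application of the Laplace--Beltrami operator together with the product rule for the surface gradient. First I would rewrite the integrand $\nabla_{\partial B} g_{\ell_1} \cdot \nabla_{\partial B} g_{\ell_2}$ by using the polarisation-type identity
\begin{equation*}
    \nabla_{\partial B} g_{\ell_1} \cdot \nabla_{\partial B} g_{\ell_2} = \tfrac{1}{2}\bigl( \Delta_{\partial B}(g_{\ell_1} g_{\ell_2}) - g_{\ell_1}\Delta_{\partial B} g_{\ell_2} - g_{\ell_2}\Delta_{\partial B} g_{\ell_1} \bigr),
\end{equation*}
which is just the product rule $\Delta_{\partial B}(fg) = f\Delta_{\partial B}g + g\Delta_{\partial B}f + 2\nabla_{\partial B}f\cdot\nabla_{\partial B}g$ rearranged. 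Applying \eqref{eq:LBeig} to the last two terms turns them into $-\ell_2(\ell_2+1)g_{\ell_1}g_{\ell_2}$ and $-\ell_1(\ell_1+1)g_{\ell_1}g_{\ell_2}$, so after multiplying by $g_{\ell_3}$ and integrating, the only nontrivial piece left is $\tfrac{1}{2}\int_{\partial B} \Delta_{\partial B}(g_{\ell_1}g_{\ell_2}) g_{\ell_3}\,\di S$.

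For that remaining term I would integrate by parts twice on the closed manifold $\partial B$ (no boundary terms) to move $\Delta_{\partial B}$ onto $g_{\ell_3}$: $\int_{\partial B}\Delta_{\partial B}(g_{\ell_1}g_{\ell_2})\,g_{\ell_3}\,\di S = \int_{\partial B} g_{\ell_1}g_{\ell_2}\,\Delta_{\partial B}g_{\ell_3}\,\di S = -\ell_3(\ell_3+1)\int_{\partial B} g_{\ell_1}g_{\ell_2}g_{\ell_3}\,\di S$, again by \eqref{eq:LBeig}. Collecting the three contributions gives exactly the coefficient $\tfrac12\bigl(\ell_1(\ell_1+1)+\ell_2(\ell_2+1)-\ell_3(\ell_3+1)\bigr)$ multiplying $\int_{\partial B} g_{\ell_1}g_{\ell_2}g_{\ell_3}\,\di S$, as claimed.

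The main obstacle — really the only place that needs care — is justifying the two manipulations that are "obvious" on a smooth manifold: the product rule for $\Delta_{\partial B}$ applied to $g_{\ell_1}g_{\ell_2}$, and the self-adjointness of $\Delta_{\partial B}$ (integration by parts with vanishing boundary terms on $\partial B$). Since the $g_{\ell_j}$ are restrictions of polynomials, everything is $C^\infty$ on the compact boundaryless manifold $\partial B$, so these are standard; one could alternatively phrase the integration-by-parts step as $\int_{\partial B}(\Delta_{\partial B}u)v\,\di S = -\int_{\partial B}\nabla_{\partial B}u\cdot\nabla_{\partial B}v\,\di S = \int_{\partial B}u\,\Delta_{\partial B}v\,\di S$ to make the symmetry transparent. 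A small additional point is that $g_{\ell_1}g_{\ell_2}$ is not itself a single spherical harmonic, so \eqref{eq:LBeig} cannot be applied to it directly — this is precisely why the integration-by-parts step (rather than a naive eigenvalue substitution) is needed to shift the Laplacian onto $g_{\ell_3}$, which \emph{is} a spherical harmonic.
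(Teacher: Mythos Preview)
Your argument is correct and is genuinely different from the paper's. The paper extends each $g_{\ell_j}$ radially to $B\setminus\{0\}$, works with the full Euclidean gradient and Laplacian (picking up $r^{-2}$ factors), and then uses a three-fold cyclic integration by parts in the ball to obtain an equation of the form $E_{\ell_1,\ell_2,\ell_3} = (\text{three volume integrals}) - E_{\ell_1,\ell_2,\ell_3}$, from which the constant drops out after converting back to surface integrals. Your route stays entirely on the closed manifold $\partial B$: one application of the product rule $\Delta_{\partial B}(fg)=f\Delta_{\partial B}g+g\Delta_{\partial B}f+2\nabla_{\partial B}f\cdot\nabla_{\partial B}g$, followed by self-adjointness of $\Delta_{\partial B}$ to shift the operator onto $g_{\ell_3}$, and then three uses of the eigenvalue relation~\eqref{eq:LBeig}. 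This is shorter and avoids the mild technicality of integrable singularities at the origin that the paper has to mention. The trade-off is that you invoke the Riemannian product rule and self-adjointness of the Laplace--Beltrami operator as black boxes, whereas the paper's approach uses only Euclidean integration by parts on $B$ and the formula $\Delta = r^{-2}\Delta_{\partial B}$ for radially constant functions; for readers less comfortable with intrinsic calculus on $\partial B$, the paper's argument may feel more concrete. Both are valid, and your observation that $g_{\ell_1}g_{\ell_2}$ is not a single eigenfunction (so the Laplacian must be moved rather than evaluated) is exactly the right point to flag.
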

\begin{proof}
    Below $\nabla$ denotes a gradient in $B$. We extend $g_{\ell_1}$, $g_{\ell_2}$, and $g_{\ell_3}$ to $B\setminus\{0\}$, constant in the radial direction. Using spherical coordinates gives
    \begin{equation} \label{eq:gip}
        \nabla g_{\ell_i}\cdot\nabla g_{\ell_j} = r^{-2}\nabla_{\partial B} g_{\ell_i}\cdot \nabla_{\partial B} g_{\ell_j},
    \end{equation}
    and, using \eqref{eq:LBeig},
    \begin{equation} \label{eq:deltag}
        \Delta g_{\ell_j} = r^{-2} \Delta_{\partial B} g_{\ell_j} = -r^{-2}\ell_j (\ell_j + 1) g_{\ell_j}.
    \end{equation}
    In the computations below, one can replace $B$ by $B\setminus B_\epsilon$ for an origin-centered ball $B_\epsilon$ with radius $\epsilon>0$, and let $\epsilon\to 0$. Since the considered functions are integrable on $B$, the limiting process gives precisely the results below, so we avoid these unnecessary technicalities in what follows. 
    	
    Define
    \begin{equation} \label{eq:Edef}
        E_{\ell_1,\ell_2,\ell_3} = \int_{B} (\nabla g_{\ell_1} \cdot \nabla g_{\ell_2}) g_{\ell_3} \, \di x.
    \end{equation}
    Consider the first integral on the right hand-side below: Integrating by parts, noting that the boundary term vanishes due to a vanishing normal derivative of $g_{\ell_2}$, and using the product rule, gives
    \begin{equation} \label{eq:Ecyclic}
        E_{\ell_1,\ell_2,\ell_3} = -\int_B g_{\ell_1}(\Delta g_{\ell_2})g_{\ell_3}\,\di x - E_{\ell_2,\ell_3,\ell_1}.
    \end{equation}
    Note that the assumptions needed for arriving at \eqref{eq:Ecyclic} are symmetric in terms of the indices $\ell_1,\ell_2,\ell_3$, and they can therefore be permuted to arrive at other such formulas. Thus we may use the cyclic property \eqref{eq:Ecyclic} in the following way:
    \begin{align}
        E_{\ell_1,\ell_2,\ell_3} &= -\int_B g_{\ell_1}(\Delta g_{\ell_2})g_{\ell_3}\,\di x - E_{\ell_2,\ell_3,\ell_1} \notag \\
   		&= -\int_B g_{\ell_1}(\Delta g_{\ell_2})g_{\ell_3}\,\di x + \int_B g_{\ell_2}(\Delta g_{\ell_3})g_{\ell_1}\,\di x + E_{\ell_3,\ell_1,\ell_2} \notag \\
   		&= -\int_B g_{\ell_1}(\Delta g_{\ell_2})g_{\ell_3}\,\di x + \int_B g_{\ell_2}(\Delta g_{\ell_3})g_{\ell_1}\,\di x - \int_B g_{\ell_3}(\Delta g_{\ell_1})g_{\ell_2}\,\di x - E_{\ell_1,\ell_2,\ell_3}. \label{eq:Eiter}
    \end{align}
    Combining \eqref{eq:deltag} and \eqref{eq:Eiter} gives
    \begin{align*}
        E_{\ell_1,\ell_2,\ell_3} &= \frac{\ell_1 (\ell_1 + 1) + \ell_2 (\ell_2 + 1) - \ell_3 (\ell_3 + 1)}{2} \int_{B} r^{-2}g_{\ell_1} g_{\ell_2} g_{\ell_3} \, \di x \\
   		&= \frac{\ell_1 (\ell_1 + 1) + \ell_2 (\ell_2 + 1) - \ell_3 (\ell_3 + 1)}{2} \int_{\partial B} g_{\ell_1} g_{\ell_2} g_{\ell_3} \, \di S.
    \end{align*}    	
    The proof is concluded by using \eqref{eq:gip} and \eqref{eq:Edef}:
    \begin{equation*}
        E_{\ell_1,\ell_2,\ell_3} = \int_{\partial B} (\nabla_{\partial B} g_{\ell_1} \cdot \nabla_{\partial B} g_{\ell_2}) g_{\ell_3} \, \di S. \qedhere
    \end{equation*}
\end{proof}

\subsection{On 3D radial Zernike polynomials}
    
Let $R_\ell^k$ be the 3D radial Zernike polynomial
\begin{equation} \label{eq:radial_pols}
    R_\ell^k(r) = \sqrt{2 \ell + 4 k + 3}\sum^k_{s = 0} (-1)^s\binom{k}{s}\binom{\ell+2k-s+\tfrac{1}{2}}{k} r^{\ell+2k-2s}, \quad \ell, k \in\N_0,
\end{equation}
where the generalised binomial coefficient is
\begin{equation*}
    \binom{x}{n} = \frac{\Gamma(x+1)}{n!\,\Gamma(x-n+1)}, \quad x > n-1,\enskip n\in \N_0. 
\end{equation*}
In \cite{Mathar2009} the 3D radial Zernike polynomials above are annotated as $R^{(\ell)}_{\ell+2k}$, similar to the 2D radial Zernike polynomials in \cite{Garde2024a}. We have modified the notation to better fit with the spherical harmonics, however one should keep this notational difference in mind. 

For the next part, consider the Pochhammer symbol (rising factorial):
\begin{equation*}
    (x)_n = \frac{\Gamma(x+n)}{\Gamma(x)}, \quad x > 0,\enskip n\in \N_0.
\end{equation*}
The coefficients in \eqref{eq:radial_pols} are the same as those in \cite{Mathar2009}: By writing out the four binomial coefficients in \cite{Mathar2009}, cancelling recurring terms, and collecting some terms in Pochhammer symbols (using that $\Gamma(n+1) = n!$ for $n\in\N_0$), gives
\begin{align*}
    &\frac{(-1)^s\sqrt{2 \ell + 4 k + 3}}{4^k}\binom{\ell+2k}{\ell}^{-1} \binom{\ell+2k}{s}\binom{\ell+k-s}{\ell}\binom{2\ell+4k-2s+1}{2k} \\
    &= \frac{(-1)^s\sqrt{2 \ell + 4 k + 3}}{k!\,4^k}\binom{k}{s}\frac{(2\ell+2k-2s+2)_{2k}}{(\ell+k-s+1)_{k}}.
\end{align*}
Using the duplication formula for Pochhammer symbols \cite[§5.2(iii) Eq.~5.2.8]{NIST} (a consequence of the Legendre duplication formula for $\Gamma$-functions)
\begin{equation*}
    (2x)_{2k} = 4^k(x)_k(x+\tfrac{1}{2})_k,
\end{equation*}
combined with 
\begin{equation*}
    \frac{(x)_k}{k!} = \binom{x+k-1}{k},
\end{equation*}
gives the coefficients in \eqref{eq:radial_pols}.

The 3D radial Zernike polynomials with the same index $\ell$ are orthonormal in the weighted space $L^2_{r^2}((0,1))$ \cite[Eq.~(33)]{Mathar2009}, 
\begin{equation*}
    \inner{R_\ell^k,R_\ell^{k'}}_{L^2_{r^2}((0,1))} = \int_0^1 R_\ell^k(r)R_\ell^{k'}\!(r)r^2\,\di r = \delta_{k,k'}.
\end{equation*}
For $p\in\N_0$ there is the following finite expansion (cf.~\cite[Eqs.~(39) and~(40)]{Mathar2009} with the notational differences, mentioned above, in mind):
\begin{equation} \label{eq:monomialexpansion}
    r^{\ell+2p} = \sum_{q = 0}^{p}\chi_{\ell}^{p,q}R_{\ell}^q(r),
\end{equation}
with
\begin{equation} \label{eq:chi}
    \chi_{\ell}^{p,q} = \inner{r^{\ell+2p},R_{\ell}^q}_{L^2_{r^2}((0,1))} = \frac{\sqrt{2\ell+4q+3}(p-q+1)_q}{(2\ell+2p+3)(\ell+p+\tfrac{5}{2})_q}
\end{equation}
in terms of Pochhammer symbols.
\begin{proposition} \label{prop:basis}
    $\{\psi_{\ell}^{k,m}\}_{\ell,k\in\N_0, m\in\Z_\ell}$ from \eqref{eq:basis} is an orthonormal basis for $L^2(B)$.
\end{proposition}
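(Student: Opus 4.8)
The plan is to establish orthonormality and completeness separately, exploiting the product structure $\psi_\ell^{k,m}(r,\theta,\varphi) = R_\ell^k(r)Y_\ell^m(\theta,\varphi)$ together with the polar-to-Cartesian change of variables on $B$, under which $\di x = r^2\sin(\theta)\,\di r\,\di\theta\,\di\varphi$ so that the $L^2(B)$ inner product factors as a product of an $L^2_{r^2}((0,1))$ inner product in the radial variable and an $L^2(\partial B)$ inner product in the angular variables.

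First I would verify orthonormality. For two basis functions $\psi_\ell^{k,m}$ and $\psi_{\ell'}^{k',m'}$, the factored integral gives $\langle R_\ell^k, R_{\ell'}^{k'}\rangle_{L^2_{r^2}((0,1))}\,\langle Y_\ell^m, Y_{\ell'}^{m'}\rangle_{L^2(\partial B)}$. If $\ell\neq\ell'$ the angular factor vanishes by orthogonality of spherical harmonics of different degrees (the $\mathcal{H}_\ell$-decomposition of $L^2(\partial B)$); if $\ell=\ell'$ but $m\neq m'$ the angular factor again vanishes since $\{Y_\ell^m\}_{m\in\Z_\ell}$ is orthonormal in $\mathcal{H}_\ell$; and if $\ell=\ell'$ and $m=m'$ the angular factor is $1$ and the radial factor is $\delta_{k,k'}$ by the stated orthonormality of the $R_\ell^k$ in $L^2_{r^2}((0,1))$. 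This covers all cases.

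Next I would prove completeness, which I expect to be the main obstacle, since one must argue that the closed span is all of $L^2(B)$ rather than merely producing an orthonormal system. The clean route is to show that the (finite) linear span of $\{\psi_\ell^{k,m}\}$ contains all polynomials restricted to $B$: indeed, a monomial written in spherical coordinates is a product $r^j$ times an angular function, and by grouping we can write any polynomial as a finite sum of terms $r^{\ell+2p}g_\ell$ with $g_\ell\in\mathcal{H}_\ell$ (using that homogeneous polynomials decompose via the $\mathcal{H}_\ell$ spaces and that $r^2$ times a harmonic polynomial of degree $\ell$ has the same angular part); then \eqref{eq:monomialexpansion} expands $r^{\ell+2p}$ as a finite combination of the $R_\ell^q$, and $g_\ell$ as a finite combination of the $Y_\ell^m$, placing the term in $\spanm\{\psi_\ell^{q,m}\}$. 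Since polynomials are dense in $L^2(B)$ (e.g.\ by Stone--Weierstrass on the compact set $\overline{B}$, then density of $C(\overline{B})$ in $L^2(B)$), the orthonormal system $\{\psi_\ell^{k,m}\}$ has dense span and is therefore an orthonormal basis.

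The only delicate bookkeeping is confirming that every homogeneous polynomial of degree $n$ lies in $\bigoplus_{\ell\le n,\,\ell\equiv n\,(2)} r^{n-\ell}\mathcal{H}_\ell$ — the standard decomposition of homogeneous polynomials into $r^{2j}$ times harmonic homogeneous polynomials — which guarantees that the degrees and parities match up exactly so \eqref{eq:monomialexpansion} applies. I would cite \cite[Chapter~5]{Axler_2001} for this decomposition rather than reprove it, and then the radial and angular expansions are both finite, completing the argument. Nothing in this step requires the $L^3$ or Fréchet-derivative machinery; it is purely a Hilbert-space density argument.
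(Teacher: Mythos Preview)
Your proof is correct and in fact takes a cleaner route to completeness than the paper does. The paper separates variables first: since $\{Y_\ell^m\}$ is a basis for $L^2(\partial B)$, it suffices that for each fixed $\ell$ the family $\{R_\ell^k\}_{k\in\N_0}$ is complete in $L^2_{r^2}((0,1))$. But for fixed $\ell$ the span of the $R_\ell^k$ is exactly $\spanm\{r^{\ell+2p}\mid p\in\N_0\}$, so the paper must still approximate an arbitrary monomial $r^n$ (of any parity and degree) by these, which is a M\"untz--Sz\'asz type density statement it imports from \cite[Lemma~6.1]{Garde2024a}. Your approach sidesteps this entirely: by first invoking the harmonic decomposition of homogeneous polynomials, each term $|x|^{2j}h_{n-2j}$ lands with $\ell=n-2j$ and radial part $r^{\ell+2j}$, so the parity and minimal degree match automatically and \eqref{eq:monomialexpansion} applies as a \emph{finite} identity. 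The price is citing the decomposition of homogeneous polynomials into $\bigoplus_{j} |x|^{2j}\mathcal{A}_{n-2j}$ (e.g.\ \cite[Theorem~5.7]{Axler_2001}), but this is standard and arguably more elementary than the radial approximation lemma the paper relies on.
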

\begin{proof}
    We have already argued that $\{\psi_{\ell}^{k,m}\}_{\ell,k\in\N_0, m\in\Z_\ell}$ is an orthonormal set in $L^2(B)$, so what remains is to prove density. Since $\{Y_{\ell}^m\}_{\ell\in\N_0,m\in\Z_\ell}$ is an orthonormal basis for $L^2(\partial B)$, it suffices to prove that $r^n$ belongs to
    \begin{equation*}
        \overline{\spanm\{R_{\ell}^{k} \mid k\in\N_0\}}
    \end{equation*}
    for any $n,\ell\in\N_0$. By \eqref{eq:monomialexpansion} we have that $r^{\ell+2p} \in \spanm\{R_{\ell}^{k} \mid k\in\N_0\}$ for $p\in \N_0$. Hence, the proof reduces to approximating $r^n$ in $L^2((0,1))$ by polynomials in $\spanm\{r^{\ell+2p} \mid p\in\N_0\}$. This is possible by \cite[Lemma~6.1]{Garde2024a}.
\end{proof}
Note in particular, that for $k=0$ then $\{\psi_{\ell}^{0,m}\}_{\ell\in\N_0, m\in\Z_\ell}$ are the regular solid harmonics, the standard orthonormal basis for $L^2$ harmonic functions in $B$.

\section{Proof of Theorem~\ref{thm:main}} \label{sec:proofthm}
	
For $\hat{\ell}\in\N$ and $\hat{m}\in\Z_{\hat{\ell}}$,
\begin{equation*}
    u_{\hat{\ell}}^{\hat{m}}(r,\theta,\varphi) = \frac{1}{\hat{\ell}}r^{\hat{\ell}} Y_{\hat{\ell}}^{\hat{m}}(\theta,\varphi)
\end{equation*}
is harmonic and has Neumann trace $Y_{\hat{\ell}}^{\hat{m}}\in L^2_\diamond(\partial B)$. Hence for $\ell_j\in\N$ and $m_j\in\Z_{\ell_j}$, and by \eqref{eq:conjsphharm}, we have in terms of the spherical coordinates:
\begin{equation} \label{eq:prodsurfacegrad}
    \nabla u_{\ell_1}^{m_1} \cdot \overline{\nabla u_{\ell_2}^{m_2}} = (-1)^{m_2}r^{\ell_1+\ell_2-2}(Y_{\ell_1}^{m_1}Y_{\ell_2}^{-m_2} + \tfrac{1}{\ell_1\ell_2}\nabla_{\partial B}Y_{\ell_1}^{m_1}\cdot\nabla_{\partial B}Y_{\ell_2}^{-m_2}).
\end{equation}
By Lemma~\ref{lemma:3gradient}, \eqref{eq:gaunt}, and the conditions in \eqref{eq:gauntindices}, the parenthesis at the end of \eqref{eq:prodsurfacegrad} can be expanded by a \emph{finite} number of spherical harmonics. In particular \eqref{eq:prodsurfacegrad} is in $L^2(B)$.

Now fix $\ell,k\in\N_0$ and $m\in\Z_\ell$. Since $\eta\in L^3(B)\subset L^2(B)$, there is an $\ell^2$-sequence of coefficients $c_{\ell'}^{k',m'}$, with $\ell',k'\in\N_0$ and $m'\in\Z_{\ell'}$, such that
\begin{equation*}
    \eta = \sum_{\ell',k' \in \N_0}\sum_{m'\in\Z_{\ell'}}c_{\ell'}^{k',m'}\psi_{\ell'}^{k',m'}.
\end{equation*}
The goal is to establish a direct formula for the coefficient $c_{\ell}^{k,m}$ in terms of $F\eta$ and coefficients with smaller $k$-indices. 

We consider a particular choice of Neumann boundary conditions, and use \eqref{eq:frechet_redef} and \eqref{eq:prodsurfacegrad} to get
\begin{equation} \label{eq:Feta1}
    \inner{(F\eta)Y_{k+1}^0,Y_{\ell+k+1}^m}_{L^2(\partial B)} = -\int_B \eta \nabla u_{k+1}^{0} \cdot \overline{\nabla u_{\ell+k+1}^{m}}\,\di x = (-1)^{m+1}\int_B \eta r^{\ell+2k}\Phi_{\ell}^{k,m}\,\di x,
\end{equation}
with
\begin{equation*}
    \Phi_{\ell}^{k,m} = Y_{k+1}^{0}Y_{\ell+k+1}^{-m} + \tfrac{1}{(k+1)(\ell+k+1)}\nabla_{\partial B}Y_{k+1}^{0}\cdot\nabla_{\partial B}Y_{\ell+k+1}^{-m}.
\end{equation*} 
Inserting the series for $\eta$ into \eqref{eq:Feta1} gives
\begin{align} \label{eq:Feta2}
    \inner{(F\eta)Y_{k+1}^0,Y_{\ell+k+1}^m}_{L^2(\partial B)} \hspace{-1.5cm}& \notag\\
    &= (-1)^{m+1}\sum_{\ell',k' \in \N_0}\sum_{m'\in\Z_{\ell'}}c_{\ell'}^{k',m'}\inner{r^{\ell+2k},R_{\ell'}^{k'}}_{L^2_{r^2}((0,1))}\int_{\partial B}\Phi_{\ell}^{k,m}Y_{\ell'}^{m'}\di S.
\end{align}

\begin{remark}
    A subtle, but important, point is that $F$ is continuous with respect to $L^3(B)$, so the integral in \eqref{eq:frechet_redef} is a dual pairing between $L^3(B)$ for $\eta$ and $L^{3/2}(B)$ for the product of gradients. But the series for $\eta$ converges in $L^2(B)$. Choosing the Neumann conditions to be spherical harmonics is therefore essential, such that \eqref{eq:prodsurfacegrad} is in fact an $L^2(B)$-function, as we argued above. This implies that the integral instead acts as the $L^2(B)$ inner product which, by continuity, enables that the summation can be moved outside the integral in \eqref{eq:Feta2}. 
    
    It may well be that if $\eta\in L^3(B)$ then the series also converges in $L^3(B)$; such a result e.g.\ holds for Fourier series, with convergence in $L^p$ for functions in $L^p$ with $p\in(1,\infty)$. However, we prefer to avoid what is likely a technical endeavour of proving such a result.
\end{remark}

Lemma~\ref{lemma:3gradient} gives
\begin{equation} \label{eq:Phiint}
    \int_{\partial B}\Phi_{\ell}^{k,m}Y_{\ell'}^{m'}\,\di S = \tau_{\ell,\ell'}^k\int_{\partial B} Y_{k+1}^{0}Y_{\ell+k+1}^{-m}Y_{\ell'}^{m'}\,\di S,
\end{equation}
with
\begin{align}
    \tau_{\ell,\ell'}^k &= 1+\frac{(k+1)(k+2)+(\ell+k+1)(\ell+k+2)-\ell'(\ell'+1)}{2(k+1)(\ell+k+1)} \notag \\
    &=\frac{(\ell+2k+2-\ell')(\ell+2k+3+\ell')}{2(k+1)(\ell+k+1)}. \label{eq:tau}
\end{align}
From \eqref{eq:gauntindices} then \eqref{eq:Phiint} vanishes unless $m' = m$. It also vanishes unless $\ell+2k+2+\ell'$ is even, i.e., $\ell+\ell'$ must be even. Finally we need $\ell\leq \ell' \leq \ell+2k+2$, and actually $\ell\leq \ell' \leq \ell+2k$ because $\tau_{\ell,\ell'}^k = 0$ for $\ell' = \ell+2k+2$ and $\ell+\ell'$ is odd for $\ell' = \ell+2k+1$. This reduces to the cases
\begin{equation*}
    m' = m \quad \text{and} \quad \ell' = \ell+2s, \quad s\in\{0,\dots,k\}.
\end{equation*}
Hence, using these indices and \eqref{eq:gaunt} simplifies \eqref{eq:Feta2} to
\begin{equation} \label{eq:Feta3}
    \inner{(F\eta)Y_{k+1}^0,Y_{\ell+k+1}^m}_{L^2(\partial B)} = \sum_{k'\in\N_0}\sum_{s=0}^k c_{\ell+2s}^{k',m}\inner{r^{\ell+2k},R_{\ell+2s}^{k'}}_{L^2_{r^2}((0,1))} D_{\ell,s}^{k,m},
\end{equation}
with
\begin{equation} \label{eq:D}
    D_{\ell,s}^{k,m} = (-1)^{m+1}\tau_{\ell,\ell+2s}^k G_{k+1,\ell+k+1,\ell+2s}^{0,-m,m}.
\end{equation}
Writing $r^{\ell+2k} = r^{\ell+2s + 2(k-s)}$ for $s\in\{0,\dots,k\}$, implies that we can use \eqref{eq:monomialexpansion} to write
\begin{equation*}
    r^{\ell+2k} = \sum_{q = 0}^{k-s}\chi_{\ell+2s}^{k-s,q} R_{\ell+2s}^{q}(r).
\end{equation*}
By orthonormality of $R_{\ell+2s}^{k'}$ and $R_{\ell+2s}^q$, their inner products vanish unless $k' = q$, which from \eqref{eq:Feta3} gives
\begin{equation*} 
    \inner{(F\eta)Y_{k+1}^0,Y_{\ell+k+1}^m}_{L^2(\partial B)} = \sum_{s=0}^k\sum_{q = 0}^{k-s}\chi_{\ell+2s}^{k-s,q} D_{\ell,s}^{k,m}c_{\ell+2s}^{q,m} = \sum_{q=0}^k\sum_{s = 0}^{k-q}\chi_{\ell+2s}^{k-s,q} D_{\ell,s}^{k,m}c_{\ell+2s}^{q,m},
\end{equation*} 
where the summation indices could be swapped, as it corresponds to summing over the same triangular pairs of indices. Writing
\begin{equation} \label{eq:Q}
    Q_{\ell,s}^{k,m,q} = \chi_{\ell+2s}^{k-s,q} D_{\ell,s}^{k,m}
\end{equation}
thus gives the formula
\begin{equation*}
    c_{\ell}^{k,m} = (Q_{\ell,0}^{k,m,k})^{-1}\Bigl( \inner{(F\eta)Y_{k+1}^0,Y_{\ell+k+1}^m}_{L^2(\partial B)} - \sum_{q=0}^{k-1}\sum_{s = 0}^{k-q}Q_{\ell,s}^{k,m,q}c_{\ell+2s}^{q,m} \Bigr).
\end{equation*}
Of course, we need to verify that $Q_{\ell,0}^{k,m,k} \neq 0$ for all $\ell,k\in\N_0$ and $m\in\Z_\ell$. Going backwards, from \eqref{eq:Q}, \eqref{eq:D}, \eqref{eq:tau}, and \eqref{eq:chi}, we have
\begin{align}
    Q_{\ell,s}^{k,m,q} &= (-1)^{m+1}\chi_{\ell+2s}^{k-s,q} \tau_{\ell,\ell+2s}^k G_{k+1,\ell+k+1,\ell+2s}^{0,-m,m} \notag \\
    &= (-1)^{m+1}\frac{\sqrt{2\ell+4q+4s+3}(k-s+1)(k-q-s+1)_q}{(k+1)(\ell+k+1)(\ell+k+s+\tfrac{5}{2})_q}G_{k+1,\ell+k+1,\ell+2s}^{0,-m,m}. \label{eq:Q2}
\end{align}
Since $s \leq k-q \leq k$, the only possibility for \eqref{eq:Q2} to vanish is related to the Gaunt coefficient. Since the conditions in \eqref{eq:gauntindices} are satisfied, we have already argued after \eqref{eq:gauntindices} that the only possibility for getting a zero is in the last of the $3j$ symbols in \eqref{eq:gauntconstants}. For $G_{k+1,\ell+k+1,\ell}^{0,-m,m}$ (with $s=0$) the corresponding $3j$ symbol is
\begin{equation} \label{eq:3jfinal}
    \begin{pmatrix}
        k+1 & \ell+k+1 & \ell \\
        0 & -m & m
    \end{pmatrix} = 
    \begin{pmatrix}
        \ell & k+1 & \ell+k+1 \\
        m & 0 & -m
    \end{pmatrix}.
\end{equation}
Here we used \cite[§34.3(ii)~Eq.~34.3.8]{NIST}, that $3j$ symbols are invariant to even permutations of the columns. For the latter $3j$ symbol in \eqref{eq:3jfinal}, then \cite[§34.3(i)~Eq.~34.3.6]{NIST} and $\abs{m}\leq\ell$ imply that it is indeed non-zero, hence concluding the proof of Theorem~\ref{thm:main}. \hfill \qed

\subsection*{Acknowledgements}

We thank Nuutti Hyv\"onen (Aalto University) for useful discussions on the method and its differences to the corresponding 2D method. HG is supported by grant 10.46540/3120-00003B from Independent Research Fund Denmark \textbar\ Natural Sciences. MH is supported by the Academy of Finland (decisions 353081 and 359181).

\appendix

\section{Extension of the linearised problem} \label{sec:extension}

In this appendix we consider a bounded smooth domain $\Omega$ in $\R^d$ for integer $d\geq 2$. Replacing $B$ with $\Omega$ in the conductivity equation \eqref{eq:strong_form}, the corresponding ND map $\Lambda(\gamma)$ is a compact self-adjoint operator in $\mathscr{L}(L^2_\diamond(\partial\Omega))$ and $\gamma\mapsto \Lambda(\gamma)$ is Fr\'echet differentiable with respect to complex-valued perturbations $\eta \in L^\infty(\Omega)$. 

For $F = D \Lambda(1; \eta)$, the Fr\'echet derivative of $\Lambda$ at $\gamma \equiv 1$ with respect to perturbation $\eta$, then $F\in \mathscr{L}(L^\infty(\Omega), \mathscr{L}(L^2_\diamond(\partial\Omega)))$ with
\begin{equation} \label{eq:frechet_gen_redef}
    \inner{(F \eta) f, g}_{L^2(\partial\Omega)} = -\int_\Omega \eta \nabla u_f \cdot \overline{\nabla u_g} \,\di x,
\end{equation}
where $u_f$ and $u_g$ are harmonic functions in $\Omega$ with $f$ and $g$ as their Neumann traces, respectively. 
\begin{proposition} \label{prop:Ld}
    $F$ extends via \eqref{eq:frechet_gen_redef} to an operator in $\mathscr{L}(L^d(\Omega), \mathscr{L}(L^2_\diamond(\partial\Omega)))$.
\end{proposition}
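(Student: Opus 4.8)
The plan is to show that the trilinear form on the right-hand side of \eqref{eq:frechet_gen_redef} is bounded when $\eta$ is measured in $L^d(\Omega)$ and $f,g$ are measured in $L^2_\diamond(\partial\Omega)$, and then invoke density of $L^\infty(\Omega)$ in $L^d(\Omega)$ to conclude that $F$ extends continuously. By H\"older's inequality with exponents $d$, $2d/(d-1)$, $2d/(d-1)$ (note $\tfrac1d + \tfrac{d-1}{2d} + \tfrac{d-1}{2d} = 1$), the integral is bounded by $\norm{\eta}_{L^d(\Omega)}\norm{\nabla u_f}_{L^{2d/(d-1)}(\Omega)}\norm{\nabla u_g}_{L^{2d/(d-1)}(\Omega)}$. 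So the whole proposition reduces to the single estimate
\begin{equation*}
    \norm{\nabla u_f}_{L^{2d/(d-1)}(\Omega)} \le C\,\norm{f}_{L^2(\partial\Omega)}
\end{equation*}
for $u_f$ harmonic in $\Omega$ with Neumann trace $f \in L^2_\diamond(\partial\Omega)$, with $C$ depending only on $\Omega$ and $d$; the bound for $g$ is identical. Given this, one has $\abs{\inner{(F\eta)f,g}_{L^2(\partial\Omega)}} \le C^2 \norm{\eta}_{L^d}\norm{f}_{L^2(\partial\Omega)}\norm{g}_{L^2(\partial\Omega)}$, which says exactly that $\eta \mapsto F\eta$ is bounded from $L^d(\Omega)$ into $\mathscr{L}(L^2_\diamond(\partial\Omega))$; since it agrees with the original $F$ on the dense subspace $L^\infty(\Omega)$, it is the unique continuous extension.

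The key estimate is a regularity statement for the Neumann problem $\Delta u_f = 0$ in $\Omega$, $\partial_\nu u_f = f$ on $\partial\Omega$, $f \in L^2(\partial\Omega)$. I would argue as follows. First, by standard elliptic theory on a smooth domain, $f \in L^2(\partial\Omega) = H^0(\partial\Omega)$ gives $u_f \in H^{3/2}(\Omega)$ with $\norm{u_f}_{H^{3/2}(\Omega)} \le C\norm{f}_{L^2(\partial\Omega)}$ (the Neumann map gains $3/2$ in the scale $H^{s}(\partial\Omega) \to H^{s+3/2}(\Omega)$), hence $\nabla u_f \in H^{1/2}(\Omega;\C^d)$ with the corresponding bound. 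Then I invoke the Sobolev embedding $H^{1/2}(\Omega) \hookrightarrow L^{q}(\Omega)$, which on a bounded smooth $d$-dimensional domain holds for $q = \tfrac{2d}{d-1}$ (the critical exponent for $s=\tfrac12$: $\tfrac1q = \tfrac12 - \tfrac{1/2}{d} = \tfrac{d-1}{2d}$). Chaining these gives the desired inequality. Note that the hypothesis that the Neumann data lie in $L^2$ rather than merely $H^{-1/2}$ is exactly what makes $\nabla u_f$ land in $H^{1/2}$ and hence in $L^{2d/(d-1)}$; with $H^{-1/2}$ data one would only get $\nabla u_f \in L^2$, insufficient to pair against a general $L^d$ perturbation. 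For $d=3$ this recovers $q = 3$ and the pairing $L^3 \times L^3 \times L^3$, consistent with the remark in Section~\ref{sec:intro}.

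The main obstacle — really the only substantive point — is justifying the endpoint Sobolev embedding $H^{1/2}(\Omega) \hookrightarrow L^{2d/(d-1)}(\Omega)$ at the critical exponent, together with the sharp $H^{3/2}$ elliptic regularity for $L^2$ Neumann data; both are classical but one must cite them carefully (e.g.\ via the fractional Sobolev embedding theorem and standard boundary regularity for the Laplacian on smooth domains, after subtracting the mean of $f$ to ensure solvability in $H^1_\diamond(\Omega)$). An alternative, avoiding fractional spaces, is to prove the $L^q$ gradient bound by representing $u_f$ through the single-layer potential with density determined by $f$ and using $L^p$ mapping properties of the boundary layer operators on a smooth domain; I would mention this as a fallback but prefer the interpolation-plus-embedding route for brevity. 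Everything else (H\"older, density, uniqueness of the extension) is routine.
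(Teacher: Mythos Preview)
Your proposal is correct and follows essentially the same route as the paper: H\"older with exponents $d,\tfrac{2d}{d-1},\tfrac{2d}{d-1}$, then $H^{3/2}$ regularity for the Neumann problem with $L^2$ data (the paper cites \cite[Chapter~2, Remark~7.2]{Lions1972}) combined with the critical embedding $H^{1/2}(\Omega)\hookrightarrow L^{2d/(d-1)}(\Omega)$ (the paper cites \cite[Corollary~4.53]{Demengel2012}). The density argument and the layer-potential alternative you mention are not needed in the paper's version, but they do no harm.
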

\begin{proof}
    The proof is analogous to that of \cite[Proposition~1.1]{Garde2024a}. Let $\eta\in L^d(\Omega)$ and $f,g\in L^2_\diamond(\partial\Omega)$. According to \cite[Chapter~2, Remark~7.2]{Lions1972}, $u_f,u_g\in H^{3/2}(\Omega)/\C$ with continuous dependence on the Neumann data,~i.e.
    \begin{equation} \label{eq:ufregular}
        \norm{u_f}_{H^{3/2}(\Omega)/\C} \leq C\norm{f}_{L^2(\partial \Omega)} \qquad \text{and} \qquad \norm{u_g}_{H^{3/2}(\Omega)/\C} \leq C\norm{g}_{L^2(\partial \Omega)}.
    \end{equation}
    Using the continuous embedding $H^{1/2}(\Omega) \hookrightarrow L^{\frac{2d}{d-1}}(\Omega)$ (e.g.~\cite[Corollary~4.53]{Demengel2012}) and the generalised H\"older inequality, we may estimate as follows:
    \begin{align*}
        \abs{\inner{(F\eta)f,g}_{L^2(\partial\Omega)}} &= \bigl|\int_\Omega \eta\nabla u_f\cdot\overline{\nabla u_g}\,\di x\bigr| \\[1mm]
        &\leq \norm{\eta}_{L^d(\Omega)}\norm{\nabla u_f}_{L^{\frac{2d}{d-1}}(\Omega)}\norm{\nabla u_g}_{L^{\frac{2d}{d-1}}(\Omega)} \\[1mm]
		&\leq C \norm{\eta}_{L^d(\Omega)}\norm{\nabla u_f}_{H^{1/2}(\Omega)}\norm{\nabla u_g}_{H^{1/2}(\Omega)} \\[1mm]
		&\leq C \norm{\eta}_{L^d(\Omega)}\norm{u_f}_{H^{3/2}(\Omega)/\mathbb{C}}\norm{u_g}_{H^{3/2}(\Omega)/\mathbb{C}}.
    \end{align*} 
    Combining this with \eqref{eq:ufregular} concludes the proof.
\end{proof}

\bibliographystyle{plain}

\end{document}